\theoremstyle{plain} 
\newtheorem{theorem}{Theorem}
\newtheorem{corollary}
{Corollary}
\theoremstyle{definition} 
{Definition}
\theoremstyle{definition} 
\newtheorem{ex}{Example}
\theoremstyle{remark} 
\theoremstyle{definition}
\newtheorem{remark}
{Remark}
\newtheorem*{remark*}{Remark}
\newcommand{\beqa}{\begin{eqnarray}}
\newcommand{\eeqa}{\end{eqnarray}}
\newcommand{\bseq}{\begin{subequations}}
\newcommand{\eseq}{\end{subequations}}
\newcommand{\N}{\mathbb N}
\newcommand{\dd}{\partial}
\renewcommand{\dd}{{\operatorname{d}}}
\newcommand{\ffrown}{\text{\raisebox{3pt}[0pt][0pt]{$\frown$}}}
\renewcommand{\O}{\underset{\ffrown}{<}}
\newcommand{\OG}{\underset{\ffrown}{>}}
\newcommand{\OO}{\mathrel{\text{\raisebox{2pt}{$\O$}}}}
\newcommand{\OOG}{\mathrel{\text{\raisebox{2pt}{$\OG$}}}}
\newcommand{\Ga}{\Gamma}
\newcommand{\si}{\sigma}
\newcommand{\la}{\lambda}
\renewcommand{\Psi}{\overline{\Phi}}
\newcommand{\fl}[1]{\lfloor#1\rfloor}
\newcommand{\ce}[1]{\lceil#1\rceil}
\renewcommand{\Re}{\operatorname{\mathsf{Re}}}
\renewcommand{\Re}{\operatorname{\mathfrak{Re}}}
\renewcommand{\Im}{\operatorname{\mathfrak{Im}}}
\newcommand{\Pin}{\operatorname{Pin}}
\newcommand{\ii}[1]{\,\mathbf{I}\{#1\}}
\newcommand{\PP}{\operatorname{\mathsf{P}}} 
\newcommand{\E}{\operatorname{\mathsf{E}}}
\newcommand{\R}{\mathbb{R}}
\newcommand{\C}{\mathbb{C}}
\newcommand{\vp}{\varepsilon}
\newcommand{\tPi}{{\tilde{\Pi}}}
\renewcommand{\le}{\leqslant}
\renewcommand{\ge}{\geqslant}
\begin{document}

\begin{frontmatter}

\title{Positive-part moments\\
via the Fourier-Laplace transform}
\runtitle{
Positive-part moments
via the Fourier-Laplace transform}

\begin{aug}

\author{\fnms{Iosif} \snm{Pinelis}\thanksref{t1}\thanksref{t2}\ead[label=e1]{ipinelis@mtu.edu}}
  \thankstext{t1}{Department of Mathematical Sciences, 
Michigan Technological University, 
Houghton, Michigan, USA. 
}
  \thankstext{t2}{Supported by NSF grant DMS-0805946}
\runauthor{Iosif Pinelis}


\end{aug}

\begin{abstract}
{
Integral expressions for positive-part moments $\E X_+^p$ ($p>0$) of random variables $X$ are presented, in terms of the Fourier-Laplace or Fourier transforms of the distribution of $X$. A necessary and sufficient condition for the validity of such an expression is given. This study was motivated by extremal problems in probability and statistics, where one needs to evaluate such positive-part moments. 
} 
\end{abstract}

%

\begin{keyword}[class=AMS]
\kwd[Primary ]{60E10}
\kwd{42A38}
\kwd[; secondary ]{60E07}
\kwd{60E15}
\kwd{42A55}
\kwd{42A61}
\end{keyword}

\begin{keyword}
\kwd{positive-part moments}
\kwd{characteristic functions}
\kwd{Fourier transforms}
\kwd{Fourier-Laplace transforms}
\kwd{integral representations}
\end{keyword}

\end{frontmatter}

\begin{center}
{\footnotesize Address:

Department of Mathematical Sciences\\
Michigan Technological University\\
Houghton, Michigan 49931, USA\\
E-mail: \texttt{ipinelis@math.mtu.edu}\\

Telephone: 906-487-2108
}
\end{center}

\begin{center}
{\footnotesize
Running head:

Positive-part moments via the Fourier-Laplace transform
}
\end{center}

\theoremstyle{plain} 

\newpage

\section{Introduction}\label{intro}
In a number of extremal problems in probability and statistics (see e.g.\ \cite{eaton1,eaton2,pin94,pin98,pin99,pin-eaton,bent-liet02,bent-jtp,bent-ap,binom,normal,asymm,be-64pp,
08hoeff,pin08}) one needs to evaluate the positive-part moments $\E X_+^p$ of random variables (r.v.'s) $X$, where $x_+:=0\vee x=\max(0,x)$, the positive part of $x$, and $x_+^p:=(x_+)^p$, for any $x\in\R$ and $p>0$. 
%

In particular, an effective procedure was needed in \cite{pin08} to compute $\E X_+^p$ for $p=3$ and r.v.'s of the form $X=\Ga_{\mu,a^2}+y\Pi_{b^2/y^2}$, where $a,b,y$ are positive real numbers, $\mu\in\R$, $\Ga_{\mu,a^2}$ and $\Pi_{b^2/y^2}$ are independent r.v.'s, 
$\Ga_{\mu,a^2}$ has the normal distribution with parameters $\mu$ and $a^2$, and $\Pi_{b^2/y^2}$
has the Poisson distribution with parameter $b^2/y^2$. For purely normal r.v.'s $X$ (without the Poisson component) such a  computation is easy. 
However, the naive approach, by the formula $\E(\Ga_{\mu,a^2}+y\Pi_{b^2/y^2})_+^p=\sum_{j=0}^\infty\E(\Ga_{\mu,a^2}+yj)_+^p
\PP(\Pi_{b^2/y^2}=j)$ did not work well, especially when $y$ is small. 
On the other hand, the series of the form $\sum_{j=0}^\infty e^{jz}\PP(\Pi_{b^2/y^2}=j)$ (which is the Laplace-Fourier transform of the Poisson distribution) is easily computable. Therefore, a natural idea was to perform a harmonic analysis of the function $\R\ni x\mapsto f_p(x):=x_+^p$. 
This can be easily done by finding its Laplace-Fourier transform. 
Then, once the function $f_p$ is decomposed into harmonics (i.e., exponential functions $x\mapsto e^{zx}$ for $z\in\C$), one almost immediately obtains a general expression for the positive-part moments of a r.v.\ $X$ in terms of the Laplace-Fourier transform of the distribution of $X$, as provided indeed by Theorem~\ref{lem:ga} in this paper.    
Such expressions turn out to be computationally effective, as well as the ones in terms of the characteristic function (c.f.) $\R\ni t\mapsto\E e^{itX}$ that are obtained as corollaries. 
Theorem~\ref{cor:improp} of this paper 
provides a necessary and sufficient condition for such a representation.  

A subsequent literature search has revealed only one paper, Brown \cite{brown70}, that contains formulas for $\E X_+^p$ in terms of the c.f.\ of $X$. However, the constant-sign argument used in \cite{brown70} does not actually seem to work for $p\in(0,2)$. 
The results in \cite{brown70} are based on the method developed by von Bahr \cite{vonBahr} to express the absolute moments $\E|X|^p$ in terms of the c.f.\ of $X$. Other papers containing such expressions for absolute moments include \cite{hsu,
brown70,brown72}; see also \cite[\S\S1.8.6--1.8.8]{petrov95} and \cite[\S11.4]{kawata}. 

As was explained, our approach differs from the previous ones. We begin with expressions of positive-part moments in terms in the Fourier-Laplace transform $z\mapsto\E e^{zX}$ with $\Re z\ne0$, and then use the Cauchy integral theorem to express $\E X_+^p$ in terms of the c.f.\ of $X$. 
It should be clear that expressions for the absolute moments will follow trivially from expressions for the positive-part moments. 

\section{Results}\label{results}
Let $X$ be any r.v. Let $s_1$ and $s_2$ be any real numbers such that $s_1\le0\le s_2$ and $\E e^{sX}<\infty$ for all $s\in[s_1,s_2]$.  
Let $p$ be any positive real number, and then let 
\begin{equation*}
k:=k(p):=\fl p\quad\text{and}\quad \ell:=\ell(p):=\ce{p-1},
\end{equation*}
respectively 
the integer part of $p$ and the smallest integer that is no less than $p-1$, so that $k\le p<k+1$, $\ell<p\le\ell+1$, and $\ell\le k$. 
For all complex $z$ and all $m=-1,0,1,\dots$, let 
\begin{align*}
	e_m(z):=e^z-\sum
	_{j=0}^m\frac{z^j}{j!},\quad 
	c_{2m}(z)&:=(-1)^{m+1}\Big(\cos z-\sum
	_{j=0}^m(-1)^j\frac{z^{2j}}{(2j)!}\Big),\\
	s_{2m+1}(z)&:=(-1)^{m+1}\Big(\sin z-\sum
	_{j=0}^m(-1)^j\frac{z^{2j+1}}{(2j+1)!}\Big),
\end{align*}
with the convention $\sum_{j=0}^{-1}a_j:=0$ for any $a_j$'s, so that $e_{-1}(z)\equiv e^z$, $c_{-2}(z)\equiv \cos z$, and $s_{-1}(z)\equiv \sin z$. 
For any complex number $z=s+it$, where $s$ and $t$ are real numbers and $i$ stands for the imaginary unit, let $\Re z:=s$ and $\Im z:=t$, the real and imaginary parts of $z$. 

In this paper, we shall present a number of identities involving certain integrals. 
For the sake of brevity, let us assume the following convention (unless specified otherwise):
when saying that such an identity takes place under certain conditions, we shall actually mean to say that under those conditions the corresponding integral exists in the Lebesgue sense (but may perhaps be infinite) \emph{and} the identity takes place. 


\begin{theorem}\label{lem:ga}\ 
For any $s\in(0,s_2]$ and any $j=-1,0,\dots,\ell$ such that $\E|X|^{j_+}<\infty$, 
\begin{align}
 \E X_+^p &=
 \frac{\Ga(p+1)}{2\pi}\int_{-\infty}^\infty
\frac{\E e_j\big((s+it)X\big)}{(s+it)^{p+1}}\,\dd t \label{eq:}\\
&=
 \frac{\Ga(p+1)}{\pi}\int_0^\infty
\Re\frac{\E e_j\big((s+it)X\big)}{(s+it)^{p+1}}\,\dd t. \label{eq:Re}
\end{align} 
\end{theorem}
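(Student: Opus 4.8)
The plan is to start from the classical Fourier-Laplace representation of the function $f_p(x)=x_+^p$ itself and then integrate against the distribution of $X$. First I would establish the one-variable identity
\[
x_+^p=\frac{\Ga(p+1)}{2\pi}\int_{-\infty}^\infty\frac{e_{-1}\big((s+it)x\big)}{(s+it)^{p+1}}\,\dd t
\qquad(s>0),
\]
i.e. the $j=-1$, deterministic-$X$ case. This should follow from the standard formula $\frac{1}{w^{p+1}}=\frac{1}{\Ga(p+1)}\int_0^\infty u^p e^{-wu}\,\dd u$ for $\Re w>0$, applied with $w=s+it$ and combined with Fourier inversion: writing $x_+^p$ as a Fourier integral and recognizing $\int_0^\infty u^p e^{-(s+it)u}e^{itx}\,\dd u$-type expressions. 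The subtraction of the partial sum $\sum_{j=0}^m z^j/j!$ inside $e_m$ is exactly what is needed to kill the non-decaying polynomial terms so that the integrand is $O(t^{-(p+1-j)})$ at infinity and integrable once $j\ge p-1$; for $-1\le j\le\ell$ with $\E|X|^{j_+}<\infty$ the subtracted terms contribute nothing to $\E f_p(X)$ because $\int_{-\infty}^\infty (s+it)^{n-p-1}\,\dd t=0$ for $n=0,1,\dots,\ell$ (each such integrand is analytic in the right half-plane and decays fast enough to push the contour to $+\infty$, or one invokes a direct residue/contour computation).

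Next I would pass from the pointwise identity to the statement about $\E X_+^p$. The idea is to write
\[
\E X_+^p=\frac{\Ga(p+1)}{2\pi}\,\E\int_{-\infty}^\infty\frac{e_j\big((s+it)X\big)}{(s+it)^{p+1}}\,\dd t
\]
and then swap $\E$ and $\int\dd t$ by Fubini-Tonelli. The justification of this interchange is where the hypotheses $s\in(0,s_2]$ and $\E|X|^{j_+}<\infty$ do their work: near $t=0$ one uses the Taylor bound $|e_j(w)|\le\frac{|w|^{j+1}}{(j+1)!}e^{|{\Re w}|_+}\cdot(\text{const})$ to get an integrand bounded by $C|X|^{j+1}e^{sX_+}$ times a locally integrable function of $t$, and since $j+1\le p\le\ell+1\le k+1$ and $\E e^{sX}<\infty$ one checks $\E|X|^{j+1}e^{sX_+}<\infty$; for large $|t|$ the factor $|s+it|^{-(p+1)}$ (after using the crude bound $|e_j(w)|\le e^{|{\Re w}|}+\sum_{i\le j}|w|^i/i!$, which for our purposes can be improved to exploit the cancellation only where needed) gives an absolutely convergent tail. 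This reduces matters to the deterministic identity applied for each fixed $x$, which I established in the first step, together with the vanishing of the polynomial corrections under the expectation.

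Finally, \eqref{eq:Re} is a formal consequence of \eqref{eq:}: since $X_+^p$ is real, the integral in \eqref{eq:} equals its own real part, and the integrand's real part is an even function of $t$ because replacing $t$ by $-t$ conjugates $s+it$ and $e_j\big((s+it)X\big)$ (as $X$ is real-valued and $e_j$ has real Taylor coefficients), so $\Re\frac{\E e_j((s-it)X)}{(s-it)^{p+1}}=\Re\frac{\E e_j((s+it)X)}{(s+it)^{p+1}}$; hence $\int_{-\infty}^\infty=2\int_0^\infty$ and the factor $\frac1{2\pi}$ becomes $\frac1\pi$.

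The main obstacle I anticipate is not the algebra but the careful bookkeeping around integrability and the interchange of expectation and integration: one must show simultaneously that the integrand is dominated near $t=0$ (where the $e_j$-cancellation controls the singularity of $(s+it)^{-(p+1)}$) and near $t=\infty$ (where instead the decay of $(s+it)^{-(p+1)}$ must beat the possible growth of $e_j((s+it)X)$, which forces one to keep $\Re(s+it)=s$ fixed and to use $\E e^{sX}<\infty$), and that the subtracted polynomial terms genuinely integrate to zero — the latter being a small contour-shifting argument that has to be done uniformly enough to survive taking $\E$. Handling the boundary case $p=\ell+1$ (integer $p$), where the decay at infinity is borderline, will require slightly more care, likely an extra cancellation or an improved estimate on $e_j$ along the line $\Re z=s$.
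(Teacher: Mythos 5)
Your route is the paper's route: first the deterministic identity $x_+^p=\frac{\Ga(p+1)}{2\pi}\int_\R e^{(s+it)x}(s+it)^{-p-1}\,\dd t$ for $s>0$ (the paper gets it by computing the Fourier transform of $u\mapsto e^{su}(x-u)_+^p$, which after the substitution $v=x-u$ is exactly your Gamma-integral formula, and then inverting); then the observation that $\int_{\Re z=s}z^{n-p-1}\,\dd z=0$ for $n=0,\dots,\ell$, which lets one pass between $j=-1$ and general $j$; then Fubini to bring in the expectation; and finally evenness in $t$ of the real part to get \eqref{eq:Re} from \eqref{eq:}. So in outline the proposal is correct and essentially identical to the paper's proof.

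One justification you give is wrong, though, and should be deleted rather than repaired. For the interchange near $t=0$ you invoke the Taylor bound on $e_j$ and claim $\E|X|^{j+1}e^{sX_+}<\infty$ ``since $j+1\le p$ and $\E e^{sX}<\infty$''. Neither part is right: for $p\notin\N$ and $j=\ell=\fl p$ one has $j+1>p$; and even when $j+1\le p$, the hypotheses $\E e^{sX}<\infty$ (possibly with $s=s_2$) and $\E|X|^{j_+}<\infty$ do not give $\E|X|^{j+1}e^{sX_+}<\infty$ — e.g.\ a right tail with density proportional to $x^{-3}e^{-s_2x}$ makes $\E X_+^{j+1}e^{s_2X_+}=\infty$, and a left tail with $\E X_-^{j}<\infty$, $\E X_-^{j+1}=\infty$ is also allowed. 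Fortunately no such bound is needed: since $s>0$, there is no singularity at $t=0$ at all ($|s+it|^{-(p+1)}\le s^{-(p+1)}$), so the crude bound $|e_j(w)|\le e^{(\Re w)_+}+\sum_{i\le j}|w|^i/i!$ that you already use for large $|t|$ works on all of $\R$, with dominating expectation $\E e^{sX}+\sum_{i\le j}\E|X|^i<\infty$; this is exactly how the paper justifies Fubini (via $\E\big|e^{(s+it)X}(s+it)^{-p-1}\big|=\E e^{sX}\,(s^2+t^2)^{-(p+1)/2}$ for $j=-1$, the polynomial corrections being handled term by term). The $e_j$-cancellation at $0$ only becomes essential later, when the contour is moved to $\Re z=0$. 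Likewise your anticipated trouble at integer $p$ is not there: the worst tail term is of order $|t|^{j-p-1}$ with $j\le\ell\le p-1$ in that case, so the decay at infinity is never borderline.
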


\begin{remark}\label{rem:details}
Of course, the statement of Theorem~\ref{lem:ga} is devoid of content in the case when $s_2=0$. As for the condition $\E|X|^{j_+}<\infty$, here we use the convention $0^0:=1$ (any other real number in place of $1$ would do here as well), to interpret $|X|^{j_+}$ when $X=0$ and 
$j\in\{-1,0\}$. 
So, the condition $\E|X|^{j_+}<\infty$ will trivially hold if $j\in\{-1,0\}$. 
On the other hand, if $j\ge1$ (and hence $p>1$), then the expression $\E e_j\big((s+it)X\big)$ under the integrals on the right-hand sides of \eqref{eq:} and \eqref{eq:Re} would lose meaning without the condition $\E|X|^{j_+}<\infty$. 
\end{remark}

\begin{corollary}\label{cor:neg}\ 
If $p\in\N$ and $\E|X|^p<\infty$, then for any $s\in[s_1,0)$ and any $j=-1,0,\dots,\ell$
\begin{align}
 \E X_+^p &=\E X^p
+
\frac{p!}{2\pi}\,\int_{-\infty}^\infty
\frac{\E e_j\big((s+it)X\big)}{(s+it)^{p+1}}\,\dd t \label{eq:neg}\\
&=\E X^p
+
\frac{p!}{\pi}\,\int_0^\infty
\Re\frac{\E e_j\big((s+it)X\big)}{(s+it)^{p+1}}\,\dd t. \notag 
\end{align}
\end{corollary}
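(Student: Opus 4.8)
The plan is to deduce Corollary~\ref{cor:neg} from Theorem~\ref{lem:ga} by a contour-shifting argument in the complex variable $z=s+it$. The key observation is that for $p\in\N$, the function $z\mapsto\dfrac{\E e_j(zX)}{z^{p+1}}$ is meromorphic on the slit-free strip $\{z:\Re z\in[s_1,s_2]\}$ except possibly at $z=0$, and the singularity at $z=0$ is, for $p$ a positive integer, of a controlled type: since $e_j(zX)=\sum_{m\ge j+1}\frac{(zX)^m}{m!}$, the quotient $\E e_j(zX)/z^{p+1}$ has at worst a pole of order $p-j$ at the origin (finite because $\E|X|^{p}<\infty$ gives analyticity of $z\mapsto\E e^{zX}$ near $0$ on the strip, together with all the needed moment bounds). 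So the difference between the two integrals — the one over the line $\Re z=s$ with $s\in(0,s_2]$ from the theorem and the one over the line $\Re z=s$ with $s\in[s_1,0)$ claimed here — should be exactly $2\pi i$ times the residue at $z=0$.

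Concretely, the first step is to fix $s_-\in[s_1,0)$ and $s_+\in(0,s_2]$ and apply the Cauchy integral theorem to $g(z):=\dfrac{\E e_j(zX)}{z^{p+1}}$ on the boundary of the rectangle with vertical sides $\Re z=s_-$ and $\Re z=s_+$ and horizontal sides $\Im z=\pm T$, indented by a small semicircle (or better, treated via the residue at the interior point $0$). The second step is to let $T\to\infty$ and check that the contributions of the horizontal segments vanish; this is where one needs decay of $\E e_j(zX)$ in $\Im z$ relative to $|z|^{p+1}$, which follows from $\E|X|^p<\infty$ (the same kind of estimate that makes the integrals in Theorem~\ref{lem:ga} converge — indeed $e_j(zX)/z^{p+1}=O(1/|z|^{1+(p-j)})$ away from the origin and integrable Fourier-analytically along vertical lines). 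The third step is to compute the residue: $\res{z=0}\dfrac{\Ga(p+1)}{2\pi}\,\dfrac{\E e_j(zX)}{z^{p+1}}$. Writing $\E e_j(zX)=\sum_{m\ge j+1}\E X^m\,z^m/m!$, the residue picks out the coefficient of $z^p$ in $\E e_j(zX)$, namely $\E X^p/p!$; multiplying by $\Ga(p+1)=p!$ and the $\frac{1}{2\pi}$ and by the factor $2\pi i$ from the residue theorem, and accounting for the orientation (the line $\Re z=s_+$ is to the right of $0$, the line $\Re z=s_-$ to its left), yields precisely the extra term $\E X^p$. Since the lower term $\sum_{m=0}^j \E X^m z^m/m!$ that was subtracted in $e_j$ has no $z^p$ coefficient when $j\le\ell\le p-1<p$ and $p\in\N$, it contributes nothing to the residue, consistent with \eqref{eq:neg} being independent of $j$.

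The passage from \eqref{eq:neg} to the $\Re$-form is then immediate: the integrand for $-t$ is the complex conjugate of the integrand for $t$ (because $\E e_j(\overline{z}X)=\overline{\E e_j(zX)}$ for real-valued $X$ and $\overline{z}^{p+1}=\overline{z^{p+1}}$), so $\int_{-\infty}^\infty=2\int_0^\infty\Re$, exactly as in the step from \eqref{eq:} to \eqref{eq:Re} in Theorem~\ref{lem:ga}.

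I expect the main obstacle to be the rigorous justification of the vanishing of the horizontal-segment contributions as $T\to\infty$, together with the interchange of expectation and contour integral throughout (Fubini), since $\E e_j(zX)/z^{p+1}$ is merely conditionally, not absolutely, integrable along the vertical lines in the relevant range of $p$ — the same subtlety already present in Theorem~\ref{lem:ga}. A clean way around it is to not send $T\to\infty$ on horizontal lines directly, but instead to invoke the already-established identities \eqref{eq:} at $s=s_+>0$ and an analogous representation obtained by the same computation as in the proof of Theorem~\ref{lem:ga} but carried out for $s<0$ (where the raw improper integral $\int_0^\infty x^p e^{-zx}\,\dd x$ no longer converges, forcing the appearance of the polynomial correction $\E X^p$), and then simply compare. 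In other words, the honest route is to redo the Laplace-transform computation of $f_p(x)=x_+^p$ with $\Re z<0$, where one must write $x_+^p=x^p-(-x)_+^p$ (valid since $p\in\N$) and Laplace-transform each piece; this automatically produces the $\E X^p$ term and sidesteps any delicate contour estimate.
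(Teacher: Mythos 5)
Your primary route --- shifting the vertical contour from $\Re z=s_+\in(0,s_2]$ to $\Re z=s\in[s_1,0)$ and picking up the residue of $\Ga(p+1)\,\E e_j(zX)/(2\pi z^{p+1})$ at $z=0$ --- is exactly the paper's argument, but only for the case $s_2>0$. Two technical remarks there: the horizontal segments are killed not by $\E|X|^p<\infty$ but by the uniform bound $|\E e^{(s'+it)X}|\le\E e^{s'X}\le\E e^{s_1X}\vee\E e^{s_2X}$ (convexity of $s'\mapsto\E e^{s'X}$ on the strip), divided by $|t|^{p+1}$; and your worry about merely conditional convergence along vertical lines is unfounded, since for $s\ne0$ one has $|\E e^{(s+it)X}|\le\E e^{sX}$ and the polynomial part of $e_j$ contributes $O(|t|^{j-p-1})$ with $j\le p-1$, so the integrand is absolutely integrable and Fubini applies as in Theorem~\ref{lem:ga}. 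The genuine gap is that the corollary's hypotheses allow $s_2=0$: the random variable $X$ need have no finite exponential moment $\E e^{\sigma X}$ for any $\sigma>0$, in which case there is no vertical line in the right half-plane on which Theorem~\ref{lem:ga} applies and the contour shift cannot even start. The paper treats this case separately, applying the already-proved case to the truncated variable $X\wedge N$ and letting $N\to\infty$ by dominated convergence (using $\E|X|^p<\infty$, $\big|\frac{\E e^{(s+it)(X\wedge N)}}{(s+it)^{p+1}}\big|\le\frac{\E e^{s(X\wedge N)}}{(s^2+t^2)^{(p+1)/2}}$ and $e^{s(X\wedge N)}\le1+e^{s_1X}$).

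Your fallback ``reflection'' route would in fact close this gap, and it is a genuinely different (arguably cleaner) proof than the paper's: for $p\in\N$ one has pointwise $x^p=x_+^p+(-1)^p x_-^p$, hence $\E X_+^p=\E X^p-(-1)^p\E(-X)_+^p$, and Theorem~\ref{lem:ga} applied to $-X$ at $\sigma=-s\in(0,-s_1]$ (legitimate because $\E e^{\sigma(-X)}=\E e^{sX}<\infty$), followed by the substitution $t\mapsto-t$ and the identity $(-(s+it))^{p+1}=(-1)^{p+1}(s+it)^{p+1}$ (valid since $p\in\N$, so no branch issue), gives $\E(-X)_+^p=(-1)^{p+1}\frac{p!}{2\pi}\int_{-\infty}^\infty\frac{\E e^{(s+it)X}}{(s+it)^{p+1}}\,\dd t$; the factors $-(-1)^p$ and $(-1)^{p+1}$ multiply to $+1$, yielding \eqref{eq:neg} with $j=-1$ for every $s\in[s_1,0)$, with no assumption on $s_2$ and no contour estimates. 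But as written your decomposition $x_+^p=x^p-(-x)_+^p$ is wrong for odd $p$ (take $p=1$, $x=-1$): the sign $(-1)^p$ must be carried through, and it is precisely its cancellation against the $(-1)^{p+1}$ from the reflected denominator that needs to be checked. You would also still need the routine reduction to $j=-1$, i.e.\ $\int_{-\infty}^\infty\frac{\dd t}{(s+it)^{p+1-j}}=0$ for $j=0,\dots,\ell$ and $s\ne0$ (here $p+1-j\ge2$), exactly as in the proof of Theorem~\ref{lem:ga}. With those details supplied, your alternative argument is complete and covers all cases of the corollary in one stroke, whereas the paper needs the separate truncation step.
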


\begin{corollary}\label{cor:0}\ 
If 
$\E|X|^p<\infty$ then 
\begin{equation}\label{eq:0}
 \E X_+^p =
 \frac{\E X^k}2\ii{p\in\N}+
 \frac{\Ga(p+1)}\pi 
\,\int_0^\infty\Re
\frac{\E e_\ell(itX)}{(it)^{p+1}}\,\dd t.
\end{equation} 
In particular, one has the following: 
for any $m\in\N$ such that $\E X^{2m}<\infty$, 
\begin{equation}\label{eq:even}
 \E X_+^{2m} =
 \frac{\E X^{2m}}2+
 \frac{(2m)!}\pi 
\,\int_0^\infty
\frac{\E s_{2m-1}(tX)}{t^{2m+1}}\,\dd t;
\end{equation}
for any $m\in\N$ such that $\E|X|^{2m-1}<\infty$, 
\begin{equation}\label{eq:odd}
 \E X_+^{2m-1} =
 \frac{\E X^{2m-1}}2+
 \frac{(2m-1)!}\pi 
\,\int_0^\infty
\frac{\E c_{2m-2}(tX)}{t^{2m}}\,\dd t.
\end{equation}
\end{corollary}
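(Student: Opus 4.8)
The plan is to deduce \eqref{eq:0} from Theorem~\ref{lem:ga} and Corollary~\ref{cor:neg}, which are already available, in two steps: first establish the deterministic (pointwise) version of \eqref{eq:0}, in which the r.v.\ $X$ is replaced by an arbitrary real number $x$, and then integrate that identity against the law of $X$ by Fubini's theorem.

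For the pointwise version I would apply Theorem~\ref{lem:ga} to the degenerate r.v.\ $X\equiv x$: this $X$ has $\E e^{sx}=e^{sx}<\infty$ for all real $s$ (so $s_2$ may be taken arbitrarily large) and $\E|X|^{\ell_+}=|x|^{\ell_+}<\infty$, so \eqref{eq:Re} with $j=\ell$ reads
\[
x_+^p=\frac{\Ga(p+1)}{\pi}\int_0^\infty\Re\frac{e_\ell\big((s+it)x\big)}{(s+it)^{p+1}}\,\dd t\qquad(s>0),
\]
and I would let $s\downarrow0$. When $p\notin\N$, since $e_\ell(w)/w^{p+1}=O(|w|^{\ell-p})$ near $0$ with $\ell-p\in(-1,0)$, the integrand is bounded by $C_x|s+it|^{\ell-p}\le C_x|t|^{\ell-p}$ near $t=0$ and by $C_x(1+|t|^\ell)/|t|^{p+1}$ near $t=\infty$, both integrable, so dominated convergence gives \eqref{eq:0} pointwise with the indicator equal to $0$. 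When $p\in\N$ (so $\ell=p-1$ and $e_{p-1}(w)\sim w^p/p!$ near $0$), the quotient $e_{p-1}(zx)/z^{p+1}$ has a simple pole at $z=0$ with residue $x^p/p!$; for $t$ near $0$ the integrand therefore behaves like $\frac{x^p}{p!}\Re\frac1{s+it}=\frac{x^p}{p!}\cdot\frac{s}{s^2+t^2}$, an approximate-identity spike of total mass $\tfrac12$, so the limit $s\downarrow0$ adds the term $\tfrac\pi2\cdot\frac{x^p}{p!}$; since $k=p$, this is precisely $\tfrac12x^k\,\ii{p\in\N}$, and \eqref{eq:0} follows pointwise. (Corollary~\ref{cor:neg} applied to $X\equiv x$ with $s\uparrow0$ gives the same identity from the other side and may be used as a check.)

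Substituting $X$ for $x$ and taking $\E$, the first term of \eqref{eq:0} becomes $\tfrac12\E X^k\,\ii{p\in\N}$, finite because $k\le p$ and $\E|X|^p<\infty$; it remains to justify the interchange $\E\int=\int\E$, i.e.\ the Lebesgue integrability in $t$ of the integrand. When $p\notin\N$ this is straightforward: $|e_\ell(iu)|\le C\min(|u|^{\ell+1},|u|^{\ell})$, so, splitting $\E$ on $\{|tX|\le1\}$ and $\{|tX|>1\}$, using the bounds $C|tX|^{\ell+1}$ and $C|tX|^{\ell}$ there, and integrating in $t$ by Tonelli (the inner integrals $\int_0^{1/|X|}|t|^{\ell-p}\,\dd t$ and $\int_{1/|X|}^\infty|t|^{\ell-p-1}\,\dd t$ converging since $\ell<p<\ell+1$), one gets $\int_0^\infty\E\bigl|\tfrac{e_\ell(itX)}{(it)^{p+1}}\bigr|\,\dd t\le C_p\,\E|X|^p<\infty$, so Fubini applies. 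When $p\in\N$ the same crude bound only gives $\bigl|\Re\frac{\E e_\ell(itX)}{(it)^{p+1}}\bigr|\le C\,\E|X|^p/|t|$, which is \emph{not} integrable near $t=0$; here one exploits a cancellation: since $e_{p-1}(w)-e_p(w)=w^p/p!$ and $\E X^p$ is real, one has $\Re\frac{\E e_{p-1}(itX)}{(it)^{p+1}}=\Re\frac{\E e_p(itX)}{(it)^{p+1}}$, and taking real parts rewrites the integrand of \eqref{eq:0} as that of \eqref{eq:even} (when $p=2m$) or \eqref{eq:odd} (when $p=2m-1$) — a short computation using $(it)^{2m+1}=(-1)^m i\,t^{2m+1}$ and $\Im e_{2m}(iu)=\pm s_{2m-1}(u)$, and analogously for $p=2m-1$ — where $s_{2m-1}$ and $c_{2m-2}$ are genuinely real and satisfy $|s_{2m-1}(u)|,|c_{2m-2}(u)|\le C\min(|u|^{p+1},|u|^{p-1})$; the same split-and-Tonelli argument, now with the strict inequalities $p-1<p<p+1$, again yields absolute integrability under $\E|X|^p<\infty$. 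Fubini then produces \eqref{eq:even} and \eqref{eq:odd}, which together with the case $p\notin\N$ constitute \eqref{eq:0}.

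I expect the integer case to be the main obstacle: there the naive absolute bound on the integrand diverges at $t=0$, so one must recognize that the leading singularity of $e_\ell(itx)/(it)^{p+1}$ is purely imaginary — equivalently, replace $e_\ell$ by $e_{\ell+1}$, i.e.\ pass to $s_{2m-1}$ or $c_{2m-2}$ — \emph{before} estimating; and, in the same vein, the half-residue spike as $s\downarrow0$, which is exactly what manufactures the $\tfrac12\E X^k$ term, has to be accounted for with care rather than passed to the limit naively.
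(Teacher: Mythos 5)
Your proposal is correct and follows essentially the same route as the paper: prove the identity first for a degenerate r.v.\ $X\equiv x$, where the half-residue of $e_\ell(zx)/z^{p+1}$ at $z=0$ produces the term $\tfrac12 x^k\ii{p\in\N}$, and then integrate over the law of $X$ by Fubini, using $|e_\ell(iu)|\OO|u|^\ell\wedge|u|^{\ell+1}$ for $p\notin\N$ and, for integer $p$, taking real parts first so that $e_\ell$ is effectively replaced by $s_{2m-1}$ or $c_{2m-2}$ before estimating. The only (inessential) difference is in the pointwise step: the paper deforms the contour of \eqref{eq:} onto the imaginary axis indented by the small semicircle $C_\vp^+$ and lets $\vp\downarrow0$, whereas you keep the vertical line $\Re z=s$ and let $s\downarrow0$, extracting the same half-residue via $\int_0^\infty s\,(s^2+t^2)^{-1}\,\dd t=\pi/2$.
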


\begin{corollary}\label{cor:XY}\ 
If $Y$ is another r.v. then 
\begin{equation}\label{eq:XY}
 \E X_+^p =\E Y_+^p +
 \frac{\Ga(p+1)}{\pi}\int_0^\infty\Re
\frac{\E e^{itX}-\E e^{itY}}{(it)^{p+1}}\,\dd t 
\end{equation}
provided that $\E|X|^p+\E|Y|^p<\infty$ and 
$\E X^j=\E Y^j$ for all $j=1,\dots,k$.
 
Moreover, one has the following: 
for any $m\in\N$
\begin{equation}\label{eq:even_XY}
 \E X_+^{2m} =\E Y_+^{2m} +(-1)^m\,
 \frac{(2m)!}{\pi}\int_0^\infty
\frac{\E\sin tX-\E\sin tY}{t^{2m+1}}\,\dd t 
\end{equation}
if $\E X^{2m}+\E Y^{2m}<\infty$ and 
$\E X^{2j+1}=\E Y^{2j+1}$ for all $j=0,\dots,m-1,m-\frac12$; 
for any $m\in\N$
\begin{equation}\label{eq:odd_XY}
 \E X_+^{2m-1} =\E Y_+^{2m-1} +(-1)^m\,
 \frac{(2m-1)!}{\pi}\int_0^\infty
\frac{\E\cos tX-\E\cos tY}{t^{2m}}\,\dd t 
\end{equation}
if $\E|X|^{2m-1}+\E|Y|^{2m-1}<\infty$ and 
$\E X^{2j}=\E Y^{2j}$ for all $j=0,\dots,m-1,m-\frac12$. 
\end{corollary}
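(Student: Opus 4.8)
The plan is to obtain \eqref{eq:XY} by subtracting two instances of formula \eqref{eq:0} of Corollary~\ref{cor:0}, one for $X$ and one for $Y$. Because $\E|X|^p<\infty$ and $\E|Y|^p<\infty$, all the quantities occurring in \eqref{eq:0} are finite: indeed $\E X_+^p\le\E|X|^p<\infty$, while $\E X^k$ is present there only when $p\in\N$, in which case $k=p$ and so $\E|X^k|=\E|X|^p<\infty$; similarly for $Y$. Hence \eqref{eq:0} itself shows that each of the integrals $\int_0^\infty\Re\frac{\E e_\ell(itX)}{(it)^{p+1}}\,\dd t$ and $\int_0^\infty\Re\frac{\E e_\ell(itY)}{(it)^{p+1}}\,\dd t$ is finite, hence (by the convention adopted before Theorem~\ref{lem:ga}) absolutely convergent. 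Thus the two integrands lie in $L^1(0,\infty)$, and the integral of their difference equals the difference of the integrals.

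It then remains to simplify the difference of integrands. For each real $t$,
\[
\E e_\ell(itX)-\E e_\ell(itY)=\big(\E e^{itX}-\E e^{itY}\big)-\sum_{j=0}^{\ell}\frac{(it)^j}{j!}\,\big(\E X^j-\E Y^j\big),
\]
where the sum vanishes: for $j=0$ the bracket is $1-1=0$, and for $1\le j\le\ell$ it is $0$ since $\ell=\ce{p-1}\le k$ and $\E X^j=\E Y^j$ for $j=1,\dots,k$ by hypothesis. So $\E e_\ell(itX)-\E e_\ell(itY)=\E e^{itX}-\E e^{itY}$ for all $t$. The same hypothesis also cancels the two $\frac{\E X^k}2\,\ii{p\in\N}$ terms: when $p\in\N$ one has $k=p\in\{1,\dots,k\}$, so $\E X^k=\E Y^k$; when $p\notin\N$ the indicator is $0$. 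Subtracting the instance of \eqref{eq:0} for $Y$ from that for $X$ and using the $\R$-linearity of $\Re$ now gives \eqref{eq:XY} exactly, the integrand there being a difference of two $L^1(0,\infty)$ functions.

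For the additional identities \eqref{eq:even_XY} and \eqref{eq:odd_XY} I would not specialise \eqref{eq:XY} (its hypotheses demand that all moments through order $k$ agree), but rather subtract two instances of \eqref{eq:even} when $p=2m$, and two instances of \eqref{eq:odd} when $p=2m-1$. The advantage is that in \eqref{eq:even} and \eqref{eq:odd} the real part has already been taken, so the polynomial correction left inside $s_{2m-1}$, resp.\ $c_{2m-2}$, involves only the odd, resp.\ even, powers of its argument. Concretely, from $s_{2m-1}(z)=(-1)^m\big(\sin z-\sum_{j=0}^{m-1}(-1)^j\frac{z^{2j+1}}{(2j+1)!}\big)$ one sees that the correction cancels in the difference under the stated conditions $\E X^{2j+1}=\E Y^{2j+1}$, $j=0,\dots,m-1$, that the terms $\frac{\E X^{2m}}2$ and $\frac{\E Y^{2m}}2$ cancel under the remaining condition $\E X^{2m}=\E Y^{2m}$, and that $\E X^{2m}+\E Y^{2m}<\infty$ makes \eqref{eq:even} applicable to each of $X$ and $Y$; this yields \eqref{eq:even_XY}. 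The parallel computation with $c_{2m-2}(z)=(-1)^m\big(\cos z-\sum_{j=0}^{m-1}(-1)^j\frac{z^{2j}}{(2j)!}\big)$, the conditions $\E X^{2j}=\E Y^{2j}$, $j=0,\dots,m-1$, together with $\E X^{2m-1}=\E Y^{2m-1}$ and $\E|X|^{2m-1}+\E|Y|^{2m-1}<\infty$, yields \eqref{eq:odd_XY}.

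The whole argument is little more than bookkeeping; the one point that calls for care, and which I expect to be the only real obstacle, is the passage from the difference of two $\dd t$-integrals to the integral of the difference. This is legitimate precisely because the finiteness of $\E|X|^p$ and $\E|Y|^p$ (respectively of $\E X^{2m}+\E Y^{2m}$, respectively of $\E|X|^{2m-1}+\E|Y|^{2m-1}$) forces each of the individual integrals furnished by \eqref{eq:0} (respectively \eqref{eq:even}, \eqref{eq:odd}) to be finite, so that no indeterminacy $\infty-\infty$ can occur.
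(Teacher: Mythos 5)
Your proposal is correct and follows essentially the same route as the paper, which simply applies Corollary~\ref{cor:0} (i.e., \eqref{eq:0}, \eqref{eq:even}, \eqref{eq:odd}) to $X$ and to $Y$ and subtracts, the matching-moment hypotheses cancelling the polynomial corrections and the $\tfrac12\E X^k$ terms. Your added care about absolute convergence (so that the difference of integrals equals the integral of the difference) is a sound elaboration of what the paper leaves implicit.
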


For any r.v.\ $X$ as in Corollary~\ref{cor:0}, it is always possible (and easy) to construct a r.v.\ $Y$ as in Corollary~\ref{cor:XY} such that the characteristic function $\E e^{itY}$, and hence its real and imaginary parts $\E\cos tY$ and $\E\sin tY$, are easily computable. In particular, one can always take $Y$ to be a r.v.\ with finitely many values; more specifically, $k+1$ values would suffice for \eqref{eq:XY} and $m+2$ values for \eqref{eq:even_XY} or \eqref{eq:odd_XY}. 
An obvious advantage of the formulas given in Corollary~\ref{cor:XY} (over those in Corollary~\ref{cor:0}) is that the corresponding integrals will converge (for $p>1$) faster near $\infty$, and just as fast near $0$. 
Also, Corollary~\ref{cor:XY} will be just one step closer than Corollary~\ref{cor:0} to possible applications to the convergence of the positive-part moments in the central limit theorem; see Example~\ref{ex:CLT} in Section~\ref{appls} for further details.


\begin{remark}\label{rem:left}
If $\E X_+^p<\infty$ for an even natural $p=2m$, then it is clear that identity \eqref{eq:0} (or, equivalently, \eqref{eq:even}) cannot hold without the condition $\E|X|^p<\infty$, since one needs the moment $\E X^k=\E X^p$ on the right-hand side of \eqref{eq:0} to exist.  
Similarly, if $\E X_+^p<\infty$ for an odd natural $p=2m-1$, then identity \eqref{eq:0} (or, equivalently, \eqref{eq:odd}) cannot hold without the condition $\E|X|^p<\infty$. 

However, if $p>0$ is not an integer, the term $\frac{\E X^k}2\ii{p\in\N}$ on the right-hand side of \eqref{eq:0} disappears. One may then wonder as to what, if any, moment condition on the \emph{left} tail of the distribution of $X$ is needed in order for identity \eqref{eq:0} to hold -- having in mind that the finiteness of the positive part moment $\E X_+^p$ depends only on the \emph{right} tail of the distribution of $X$. 
Perhaps surprisingly, it turns out that a left-tail condition 
is necessary for \eqref{eq:0}, even if the integral in \eqref{eq:0} is allowed to be understood as an improper one (at $0$); moreover, 
the moment condition $\E|X|^p<\infty$ in Corollary~\ref{cor:0} can be only slightly relaxed, even when $p$ is not an integer.  
Actually, one can obtain a necessary and sufficient condition for such a representation of the positive-part moment in terms of the characteristic function, as described in the following theorem.  
\end{remark}

\begin{theorem}\label{cor:improp}\ 
Take any $p\in(0,\infty)\setminus\N$ and any r.v.\ $X$ with $\E X_+^p<\infty$. 
Then  
the following two conditions are equivalent to each other:
\begin{enumerate}
\item[\emph{(I)}]\label{improp} $\E|X|^\ell<\infty$\quad and\quad  
$
 {\displaystyle \E X_+^p =
 \dfrac{\Ga(p+1)}\pi 
\,\int_{0+}^\infty\Re
\dfrac{\E e_\ell(itX)}{(it)^{p+1}}\,\dd t;}
$
\item[\emph{(II)}]\label{improp equiv}
$
 {\displaystyle \PP(X_->x)=o(1/x^p)\ \text{as}\ x\to\infty.}
$
\end{enumerate}
\end{theorem}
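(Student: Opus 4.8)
The plan is to strip off the trivial cases, then move the identity in (I) from the transform side to a one‑dimensional statement about the left tail of $X$, and finally treat the two implications separately, the converse being where the real work lies.

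\textbf{Step 0 (reductions).} Since $p\notin\N$ we have $\ell=\ce{p-1}=\fl p<p$. From $\E X_+^p<\infty$ and the inequality $t^\ell\le 1+t^p$ $(t\ge0)$ we get $\E X_+^\ell<\infty$; and condition (II) forces $\E X_-^\ell<\infty$ (integrate $\PP(X_->x)=o(x^{-p})$ against $\ell x^{\ell-1}\,\dd x$, using $\ell-1-p<-1$). Hence, if $\E|X|^\ell=\infty$, then (I) fails (its first clause does) and so does (II), so the equivalence is trivial in that case. I may therefore assume throughout that $\E|X|^\ell<\infty$, and it remains to show that the displayed identity in (I) holds iff (II).

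\textbf{Step 1 (reduce the identity to a limit, and to a statement about $X_-$).} I would start from the pointwise representation $x_+^p=\frac{\Ga(p+1)}\pi\int_0^\infty\Re\frac{e_\ell(itx)}{(it)^{p+1}}\,\dd t$ $(x\in\R)$, absolutely convergent for each $x$ — this is Corollary~\ref{cor:0} applied to a one‑point law (or the case $s\downarrow0$ of Theorem~\ref{lem:ga}). The elementary bound $|e_\ell(i\theta)|\le\min\{2|\theta|^\ell/\ell!,\ |\theta|^{\ell+1}/(\ell+1)!\}$ together with $\E|X|^\ell<\infty$ permits Fubini on $[\vp,\infty)$, giving $\int_\vp^\infty\Re\frac{\E e_\ell(itX)}{(it)^{p+1}}\,\dd t=\frac{\pi}{\Ga(p+1)}\E X_+^p-R(\vp)$ with $R(\vp):=\E\int_0^\vp\Re\frac{e_\ell(itX)}{(it)^{p+1}}\,\dd t$; thus the identity in (I) is exactly the statement $R(\vp)\to0$ as $\vp\downarrow0$. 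The substitution $u=t|x|$ yields the scaling identity $\int_0^\vp\Re\frac{e_\ell(itx)}{(it)^{p+1}}\,\dd t=x_+^p\,h(\vp x_+)+x_-^p\,\Phi(\vp x_-)$, with the universal functions $h(r):=\int_0^r\Re\frac{e_\ell(iu)}{(iu)^{p+1}}\,\dd u$ and $\Phi(r):=\int_0^r\Re\frac{e_\ell(-iu)}{(iu)^{p+1}}\,\dd u$, which are continuous and bounded on $[0,\infty)$, with $h(0)=\Phi(0)=0$, $h(\infty)=\pi/\Ga(p+1)$, $\Phi(\infty)=0$ (the last two from the pointwise identity at $x=\pm1$), and — the fact that drives everything — $\Phi(r)=c_*\,r^{\ell-p}+O(r^{\ell-p-1})$ as $r\to\infty$, where $c_*=\cos(\pi(\ell+p+1)/2)/(\ell!\,(p-\ell))\neq0$ \emph{precisely because $p\notin\N$}. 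Taking expectations, $R(\vp)=\E[X_+^p\,h(\vp X_+)]+\E[Z^p\Phi(\vp Z)]$ with $Z:=X_-$; the first term tends to $0$ by dominated convergence ($h$ bounded, $h(0)=0$, $\E X_+^p<\infty$). So (I) holds iff $\lim_{\vp\downarrow0}\E[Z^p\Phi(\vp Z)]=0$, and the theorem is now equivalent to: \emph{for $Z\ge0$ with $\E Z^\ell<\infty$, one has $\lim_{\vp\downarrow0}\E[Z^p\Phi(\vp Z)]=0$ iff $\PP(Z>x)=o(x^{-p})$}.
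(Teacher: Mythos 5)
Your Steps 0--1 are correct and in fact coincide with the paper's own reduction (its Step~0 and Step~5): splitting $e_\ell(itX)=e_\ell(itX_+)+e_\ell(-itX_-)$, using the one-point identity \eqref{eq:0,x} to dispose of the $X_+$ part, and reducing (I) to the statement that $\E\big[X_-^p\,\Phi(\vp X_-)\big]\to0$ as $\vp\downarrow0$ (your $\Phi$ is $-I_p$ in the paper's notation \eqref{eq:I}). But the proposal stops exactly where the real work begins: you never prove the equivalence ``$\lim_{\vp\downarrow0}\E[Z^p\Phi(\vp Z)]=0$ iff $\PP(Z>x)=o(x^{-p})$'' to which you have reduced the theorem. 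Neither direction is supplied: the sufficiency needs the global upper bound $|\Phi(r)|\OO r^{\ell-p}(r\wedge1)$ together with a Pitman-type argument converting $\PP(Z>x)=o(x^{-p})$ into the vanishing of the truncated moments $x^{p-\ell-1}\E Z^{\ell+1}\ii{Z\le x}+x^{p-\ell}\E Z^\ell\ii{Z>x}$ (the paper's Step~6), and the necessity needs much more than the large-$r$ asymptotic you cite.

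Concretely, the fact you call ``the fact that drives everything'' --- $\Phi(r)=c_*r^{\ell-p}+O(r^{\ell-p-1})$ as $r\to\infty$ with $c_*\ne0$ --- is not enough for the implication (I)$\Rightarrow$(II). Writing $\E[Z^p\Phi(\vp Z)]$ as the sum of the contributions from $\{\vp Z\le M\}$ and $\{\vp Z>M\}$, the asymptotic controls the sign and size only of the second piece; the first piece is merely bounded in absolute value by something of the same order $\vp^{\ell-p}\,\E[Z^\ell(\vp Z\wedge1)]$, so a priori the two pieces could cancel, letting $\E[Z^p\Phi(\vp Z)]\to0$ while the tail condition fails. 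What rules this out is that $\Phi$ has \emph{constant sign on all of} $(0,\infty)$, with the two-sided bound $(-1)^{\ell+1}\Phi(r)\asymp r^{\ell-p}(r\wedge1)$ for every $r>0$ (the paper's \eqref{eq:J}); establishing this is the bulk of the paper's proof --- the positivity of $I_p$ on $(0,\infty)$ for $p\in(0,1)$ via the location of the local minima and the integral representation leading to \eqref{eq:>0} (Step~2), the resulting two-sided estimate \eqref{eq:I, p<1} (Step~3), and the induction on $\ell$ (Step~4). Without this global sign-and-size information (or a substitute for it), together with the Step~6 equivalence of \eqref{eq:ii} with condition (II), your argument establishes only the reduction, not the theorem.
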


Here and in what follows, $x_-:=(-x)_+$ for all $x\in\R$; also let $x_-^p:=(x_-)^p$. 


Note that the part $\E|X|^\ell<\infty$ of condition (I) of Theorem~\ref{cor:improp} will trivially hold if $0<p<1$ (and hence $\ell=0$). 
On the other hand, if $p>1$ (and hence $\ell\ge1$), then the expression $\E e_\ell(itX)$ in condition (I) of Theorem~\ref{cor:improp} would lose meaning if $\E|X|^\ell=\infty$; cf.\ Remark~\ref{rem:details}. 

Note also that,  
if condition~(II) or, equivalently, (I) of Theorem~\ref{cor:improp} holds, then $\E X_-^r<\infty$ for all $r\in(0,p)$. 
Indeed, 
condition~(II) of Theorem~\ref{cor:improp} implies 
\begin{equation}\label{eq:rth moments}
\E X_-^r={\scriptstyle\int_0^\infty} rx^{r-1}\,\PP(X_->x)\,\dd x
=O\big({\scriptstyle\int_0^\infty} rx^{r-1}\,(1\wedge x^{-p})\,\dd x\big)<\infty.	
\end{equation}

On the other hand, it is easy to give examples where condition~(II) of Theorem~\ref{cor:improp} holds, while $\E X_-^p=\infty$, and so,  Corollary~\ref{cor:0} is not applicable; for instance, take any r.v.\ $X$ such that $\PP(X_->x)=1/(x^p\ln x)$ for all large enough $x>0$. 

In view of Corollary~\ref{cor:0} and Theorem~\ref{cor:improp}, 
for the identity \eqref{eq:0} to hold for a given $p\in(0,\infty)\setminus\N$ (with the integral understood in the Lebesgue sense) and a r.v.\ $X$ with $\E X_+^p<\infty$, it is sufficient that 
$\E X_-^p<\infty$ and it is necessary that $\E X_-^r<\infty$ for all $r\in(0,p)$. 

Yet, no ``simple'' necessary \emph{and} sufficient condition for \eqref{eq:0} to hold -- in the Lebesgue sense -- for a given $p\in(0,\infty)\setminus\N$ appears to be known; it is apparently an open question whether such a condition exists at all. 
However, one can see that condition (II) of Theorem~\ref{cor:improp} is not sufficient (although, by Theorem~\ref{cor:improp}, it is necessary) for \eqref{eq:0}. 
Indeed, for   
%
any $p\in(0,\infty)\setminus\N$, there exists a r.v.\ $X$ with $\E X_+^p<\infty$ such that the two equivalent conditions -- (I) and (II) -- of Theorem~\ref{cor:improp} hold, while the integral 
$\int_{0}^\infty\Re\frac{\E e_\ell(itX)}{(it)^{p+1}}\,\dd t$ does not exist in the Lebesgue sense. 
For instance, one can consider 
\begin{ex}\label{ex:improp}
The idea is to take a r.v.\ $X$ with a lacunary distribution and then construct a matching lacunary set $A\subset(0,\infty)$ such that the integrand 
$\Re\frac{\E e_\ell(itX)}{(it)^{p+1}}$ is of constant sign and large enough in absolute value for $t\in A$. 
Take indeed any $p\in(0,\infty)\setminus\N$ and let $X$ be a discrete r.v.\ such that, for some $d\in(1,\infty)$, one has 
$
	\PP(X=-d^n)=\frac c{nd^{np}}\quad\text{for all } n\in\N,
$ 
where $c$ is the constant chosen so that $\sum_{n\in\N}\PP(X=-d^n)=1$. Then 
it is easy to check that condition (II) (and hence condition (I)) of Theorem~\ref{cor:improp} holds. 
Moreover, $X_+=0$ a.s., and so, $\E X_+^p<\infty$. 
Further, one can show that there exist some $d=d(p)>1$, $\vp=\vp(p)\in(0,1-1/d)$, and $m=m(p)\in\N$ such that 
$
	(-1)^m\Re\frac{\E e_\ell(itX)}{(it)^{p+1}}\ge
	\frac1{t\ln\frac\vp t}
$ 
for all $t\in A:=\bigcup\nolimits_{n=1}^\infty[\frac{\vp(1-\vp)}{d^n},\frac\vp{d^n}]$, whence 
$(-1)^m\int_A\Re\frac{\E e_\ell(itX)}{(it)^{p+1}}\,\dd t=\infty$. 
For details, see \cite[Proposition~2.6 and its proof]{pos-arxiv}. \qed
\end{ex}

Theorem~\ref{cor:improp} 
and Example~\ref{ex:improp} 
are similar in spirit to 
a number of known if-and-only-if conditions and counterexamples, respectively; those results in the literature mainly concern such simpler relations as the ones of the tails/moments of the distribution with the derivatives of the characteristic function at $0$ (but also with other linear functionals of the c.f.) 
-- see e.g.\ \cite{fortet,zygmund47,pitman56,boas67,boas68}. 
%
%
Ibragimov \cite{ibrag66} provides necessary and sufficient conditions -- both in terms of (truncated) moments/tails and the c.f.\ -- for rates $O(n^{-p})$ in the central limit theorem to hold; cf.\ Example~\ref{ex:CLT} in the next section. 


\section{Applications}\label{appls}
Here let us give examples of known or potential applications of identities presented in Section~\ref{results}. 
\begin{ex}\label{ex:hoeff} 
Let $X_1,\dots,X_n$ be independent r.v.'s such that $X_i\le y$ for some real $y>0$ and $\E X_i\le0$, for all $i$. Let 
$S:=X_1+\dots+X_n$ and assume that $\sum_i\E X_i^2\le\si^2$ for some real $\si>0$. Also, let 
$\vp:=\frac1{\si^2y}\,\sum_i\E(X_i)_+^3$, so that $0<\vp<1$. 
In the paper \cite{pin08}, mentioned in the Introduction, an optimal in a certain sense upper bound on the tail $\PP(S\ge x)$ was obtained, of the form 
\begin{equation*}
\Pin(x):=\frac{\E^3(\eta-t_x)_+^2}{\E^2(\eta-t_x)_+^3},  	
\end{equation*}
where $x\in\R$; $\eta:=\Ga_{(1-\vp)\si^2}+y\tPi_{\vp\si^2/y^2}$; 
$\Ga_{(1-\vp)\si^2}$ and $\tPi_{\vp\si^2/y^2}$ are any independent r.v.'s whose distributions are, respectively, the centered Gaussian one with variance $(1-\vp)\si^2$ and the centered Poisson one with variance $\vp\si^2/y^2$; $t_x\in\R$ is the only real root of the equation $m(t_x)=x$; and $m(t):=t+\E(\eta-t)_+^3/\E(\eta-t)_+^2$. 
Thus, to compute the bound $\Pin(x)$, one needs a fast calculation of the positive-part moments of the r.v.\ $X:=\eta-t$. While the direct approaches mentioned in the Introduction do not work here, the formulas of Theorem~\ref{lem:ga} and Corollary~\ref{cor:0} prove very effective, since the Fourier-Laplace transform of the r.v.\ $\eta-t$ is given by a simple expression: 
\begin{equation}\label{eq:PUexp}
	\E e^{z(\eta-t)}=\exp\big\{-zt+\tfrac{\la^2}2\,(1-\vp)\si^2+\tfrac{e^{zy}-1-zy}{y^2}\,\vp\si^2\big\}\quad
	\forall z\in\C. 
\end{equation}
It takes under $0.5$ sec in Mathematica on a standard Core 2 Duo laptop to produce an entire graph of the bound $\Pin(x)$ for a range of values of $x$ using either \eqref{eq:Re} or \eqref{eq:0}. 
\end{ex}

\begin{ex}\label{ex:finance} 
As pointed out by a referee, positive-part moments naturally arise in mathematical finance. For example, $(S-K)_+$ is the value of a call option with the strike price $K$ when the stock price is $S$. 
Exact bounds on $\E(S-K)_+$ under various conditions were recently given in \cite{de la pena et al}. 
\end{ex}

\begin{ex}\label{ex:CLT} 
Just as it was done in \cite{brown70} for absolute moments, the identities given in Corollary~\ref{cor:XY} could be used to prove the convergence of the positive-part moments in the central limit theorem, or even to obtain bounds on the rate of such convergence. 
However, we shall not be pursuing this matter here, since the main motivation for the present paper was the need for an effective computation of the bound $\Pin(x)$, as described in Example~\ref{ex:hoeff}.   
\end{ex}

\section{Proofs}\label{proofs}
For any two expressions $a$ and $b$, let us write: $a\OO b$ or, equivalently, $b\OOG a$ if $|a|\le Cb$ for some positive constant $C$ depending only on $p$; $a\asymp b$ if $ab>0$, $a\OO b$, and $b\OO a$.

Let us also write  
$a<<b$ (to be read ``$a$ is much less than $b$'') or, equivalently, $b>>a$ (``$b$ is much greater than $a$'') if $b>0$ and $|a|=o(b)$. Accordingly, if $\mathcal A$ is an assertion and $a>0$, then ``$\mathcal A$ holds if $a<<b$'' will mean ``for all $p\in(0,\infty)\setminus\N$ there exists some constant $K=K(p)>0$ such that $\mathcal A$ holds whenever $|a|<b/K$'', that is, $\mathcal A$ will hold ``eventually''.
Similarly, ``if $a<<b$ and $\mathcal A$ holds, then $a_1<<b_1$'' will mean ``for all $p\in(0,\infty)\setminus\N$ and all $K_1>0$ there exists $K=K(p,K_1)>0$ such that one has the implication `if $|a|<b/K$ and $\mathcal A$ holds, then $|a_1|<b_1/K_1$'\;''.

Also, let us write $a\sim b$ if $a/b\to1$. 

\begin{proof}[Proof of Theorem~\ref{lem:ga}]\ 
Note that \eqref{eq:Re} immediately follows from \eqref{eq:}, since the integrand in \eqref{eq:Re} is even in $t\in\R$. 
Note also that 
$\int_{C_s}\frac{z^j}{z^{p+1}}\,\dd z=0$ for all $j=0,\dots,\ell$ and $s\ne0$, where $C_s:=\{z\in\C\colon\Re z=s\}$. 
So, it is enough to prove \eqref{eq:} 
with $j=-1$. 
The key observation here is the following: for any $x\in\R$ and any complex number $z=s+it$ with $s:=\Re z>0$, 
\begin{equation*}
\int_{-\infty}^\infty e^{zu}\,(x-u)_+^p\,\dd u
=\int_{-\infty}^x e^{zu}\,(x-u)^p\,\dd u 
=e^{zx}\int_0^\infty v^p e^{-zv}\,\dd v
=\frac{\Ga(p+1)}{z^{p+1}}\,e^{zx};
\end{equation*}
this is obvious for real $z>0$, and then one can use 
analytic continuation. 
%
Thus, for each $s\in(0,\infty)$ the Fourier transform $\R\ni t\longmapsto\int_{-\infty}^\infty e^{itu}\,e^{su}\,(x-u)_+^p\,\dd u$ 
of the integrable function $\R\ni u\longmapsto e^{su}\,(x-u)_+^p$ is the function $\R\ni t\longmapsto\break
\Ga(p+1)\,\frac{ e^{(s+it)x}}{(s+it)^{p+1}}$, which is integrable as well. So, by the inverse Fourier transform, one obtains \eqref{eq:} (with $j=-1$) with the real constant $x$ in place of the r.v.\ $X$. (Equivalently, one can use here the Laplace inversion.) 
Now \eqref{eq:} follows by the Fubini theorem, since 
$
\E\big|\frac{e^{(s+it)X}}{(s+it)^{p+1}}\big|=\frac{\E e^{sX}}{(s^2+t^2)^{(p+1)/2}}
$ 
for all $s>0$ and $t\in\R$. 
\end{proof}

\begin{proof}[Proof of Corollary~\ref{cor:neg}]\  
Here, just as in the proof of Theorem~\ref{lem:ga}, without loss of generality (w.l.o.g.)\ $j=-1$. 
Consider first the case when $s_2>0$. W.l.o.g., $s_1<0$. 
By the convexity of $\E e^{sX}$ in $s$, one has  
$\big|\frac{\E e^{(s+it)X}}{(s+it)^{p+1}}\big|\le
\frac{\E e^{s_1X}\vee\E e^{s_2X}}{|t|^{p+1}}$ for all $s\in[s_1,s_2]$ and all $t\ne0$, so that 
$\int_{s_1}^{s_2}\big|\frac{\E e_j\big((s+it)X\big)}{(s+it)^{p+1}}\big|\,\dd s\to0$ as $|t|\to\infty$. 
Hence, by the Cauchy integral theorem and the continuity of the integral $I(s):=\int_{-\infty}^\infty\frac{\E e^{(s+it)X}}{(s+it)^{p+1}}\,\dd t$ in $s\in[s_1,s_2]$, the difference between the value of $I(s)$ for $s\in(0,s_2]$ and that of $I(s)$ for $s\in[s_1,0)$ is 
$
	\frac1i\int_{C_\vp}\frac{\E e^{zX}}{z^{p+1}}\,\dd z
$, 
where $C_\vp$ is the circle in $\C$ of radius $\vp$ centered at $0$ and traced out counterclockwise, provided that $\vp\in(0,s_1\wedge s_2)$. 
So, \eqref{eq:neg} (in the case when $s_2>0$ and with $j=-1$) follows from \eqref{eq:} by taking $\vp\downarrow0$, since $e^{zX}=e_p^{(zX)}+O(|zX|^{p+1}e^{\vp|X|})$ over $z\in C^\vp$; recall that here it is assumed that $p\in\N$. 

Assume now that $s_2=0$ (and, again, $j=-1$). Then, by what has just been proved, one has \eqref{eq:neg} for any $s\in[s_1,0)$ with $X\wedge N$ in place of $X$, where $N$ is any real number. 
By the dominated convergence theorem and in view of the condition $\E|X|^p<\infty$, one has 
$\E(X\wedge N)_+^p\to\E X_+^p$ and $\E(X\wedge N)^p\to\E X^p$ as $N\to\infty$. 
To complete the proof of Corollary~\ref{cor:neg}, it remains to use again the dominated convergence theorem, in view of the inequalities $\big|\frac{\E e^{(s+it)(X\wedge N)}}{(s+it)^{p+1}}\big|\le
\frac{\E e^{s(X\wedge N)}}{(s^2+t^2)^{(p+1)/2}}$ and  $e^{s(X\wedge N)}
\le1+e^{s_1X}$ for all $N\ge0$, $s\in[s_1,0)$, and $t\in\R$.
\end{proof}

\begin{proof}[Proof of Corollary~\ref{cor:0}]\  
Let us first prove identity \eqref{eq:0} in the case when the r.v.\ $X$ takes on only one value, say $x\in\R$.
For any $\vp>0$, consider the integration contour $C:=\{it\colon t\le-\vp\}\cup C_\vp^+\cup\{it\colon t\ge\vp\}$, where  $C_\vp^+:=\{z\in\C\colon|z|=\vp,\ \Re z>0\}$.
Then, by \eqref{eq:} (with $j=-1$) and the Cauchy integral theorem, 
\begin{equation}\label{eq:pre 0,x}
 \frac{2\pi i}{\Ga(p+1)}\,x_+^p 
 =\int_C\frac{e^{zx}}{z^{p+1}}\,\dd z 
 =\int_C\frac{e_\ell(zx)}{z^{p+1}}\,\dd z,
\end{equation}
because $\int_C\frac{z^j}{z^{p+1}}\,\dd z=0$ for all $j=0,\dots,\ell$.
Next, $e_\ell(zx)=\frac{z^{\ell+1}}{(\ell+1)!}x^{\ell+1}+O(|z|^{\ell+2})$ as $z\to0$, and so, 
\begin{align*}
	\int_{C_\vp^+}\frac{e_\ell(zx)}{z^{p+1}}\,\dd z
	&=\frac{x^{\ell+1}}{(\ell+1)!}\int_{C_\vp^+}\frac{\dd z}{z^{p-\ell}}+O(\vp^{\ell-p+2}) \\
	&=i\pi\frac{x^{\ell+1}}{(\ell+1)!}\ii{p\in\N}
	+O\big(\vp^{\ell-p+1}(\ii{p\notin\N}+\vp)\big) \\
	&=i\pi\frac{x^{\ell+1}}{(\ell+1)!}\ii{p\in\N}
	+o(1)
	=i\pi\frac{x^k}{\Ga(p+1)}\ii{p\in\N}
	+o(1) 
\end{align*}
as $\vp\downarrow0$. 
This and \eqref{eq:pre 0,x} imply
\begin{equation}\label{eq:0,x}
 x_+^p =
 \frac{x^k}2\ii{p\in\N}+
 \frac{\Ga(p+1)}\pi 
\,\int_{0+}^\infty\Re
\frac{e_\ell(itx)}{(it)^{p+1}}\,\dd t, 
\end{equation}
since the latter integrand is even in $t\in\R$. 

Next, observe that 
\begin{equation}\label{eq:e estimate}
|e_\ell(iu)|\OO|u|^\ell\wedge|u|^{\ell+1}	
\end{equation}
over all $u\in\R$, and so, for all $x\in\R$
\begin{equation}\label{eq:abs}
	\int_0^\infty\Big|
\frac{e_\ell(itx)}{(it)^{p+1}}\Big|\dd t=c_\ell|x|^p,
\end{equation}
where $c_\ell:=\int_0^\infty|\frac{e_\ell(iu)}{(iu)^{p+1}}|\dd u<\infty$
for $p\in(0,\infty)\setminus\N$. 
Therefore, \eqref{eq:0} follows from \eqref{eq:0,x} by the Fubini theorem -- provided that $p\in(0,\infty)\setminus\N$. 

The case $p\in\N$ is treated quite similarly. Namely, if $p=2m$ for some $m\in\N$, then 
$\Re\frac{e_\ell(itx)}{(it)^{p+1}}=\frac{s_{2m-1}(tx)}{t^{2m+1}}$ for all $x\in\R$ and $t>0$, and so, 
\eqref{eq:even} follows by the Fubini theorem from \eqref{eq:0,x}, the estimate $|s_{2m-1}(u)|\OO|u|^{2m-1}\wedge|u|^{2m+1}$ for all $u\in\R$, and the condition $\E X^{2m}<\infty$. If $p=2m-1$ for some $m\in\N$, then \eqref{eq:odd} similarly follows using the estimate $|c_{2m-2}(u)|\OO|u|^{2m-2}\wedge|u|^{2m}$ for all $u\in\R$ and the condition $\E|X|^{2m-1}<\infty$. 
\end{proof}

\begin{proof}[Proof of Corollary~\ref{cor:XY}]\ 
This immediately follows from Corollary~\ref{cor:0}, applied the r.v.\ $Y$, as well as to $X$. (Note that $k=p$ and $\ell=p-1$ if $p\in\N$, and $k=\ell$ if $p\notin\N$.)
\end{proof}

\begin{proof}[Proof of Theorem~\ref{cor:improp}]\ 
\emph{Step 0.}\quad 
Take indeed any $p\in(0,\infty)\setminus\N$ and any r.v.\ $X$ with $\E X_+^p<\infty$. 
Recalling \eqref{eq:rth moments}, note that either one of the conditions (I) and (II) implies  
\begin{equation}\label{eq:EX ell}
	 \E|X|^\ell<\infty.
\end{equation}

\emph{Step 1.}\quad 
For $v\ge0$, let
\begin{equation}\label{eq:I}
	I(v):=I_p(v)
	:=\int_v^\infty\Re\frac{ e_\ell(-iu)}{(iu)^{p+1}}\,\dd u. 
\end{equation}
This definition is correct, since the integral exists in the Lebesgue sense, in view of estimate \eqref{eq:e estimate}. 
Note that
\begin{equation}\label{eq:I(0)}
	I_p(0)=0;
\end{equation}
this is a special case of \eqref{eq:0}, with $X=-1$ almost surely (a.s.).  
In turn, \eqref{eq:I(0)} implies
\begin{equation}\label{eq:-I}
	I_p(v)=-\int_0^v\Re\frac{ e_\ell(-iu)}{(iu)^{p+1}}\,\dd u. 
\end{equation}

\emph{Step 2.}\quad At this step, let us prove that 
if $p\in(0,1)$ and $v\in(0,\infty)$, then 
\begin{equation}\label{eq:I>0}
	I_p(v)>0.
\end{equation}
Take indeed any $p\in(0,1)$. Then $\ell=0$, and 
\eqref{eq:I} and \eqref{eq:-I} can be rewritten as 
\begin{align}
	I(v)&=\int_v^\infty
	\frac{\sin\frac{\pi p}2-\sin(\frac{\pi p}2+u)}{u^{p+1}}\,\dd u \label{eq:simplif}\\
	&=\int_0^v
	\frac{\sin(\frac{\pi p}2+u)-\sin\frac{\pi p}2}{u^{p+1}}\,\dd u \label{eq:simplif-}
\end{align}
for all $v>0$. 
Observe next that $v^{p+1}I'(v)=\sin(\frac{\pi p}2+v)-\sin\frac{\pi p}2$ for all $v>0$, whence the only local minima of $I$ in $(0,\infty)$ are at the points $2\pi,4\pi,\dots$. 
Therefore, and because of \eqref{eq:I(0)} and the obvious equality $I(\infty-)=0$, to prove \eqref{eq:I>0} it suffices to show that $I(2j\pi)>0$ for all $j\in\N$. 

Using \eqref{eq:simplif} and (twice) applying the Fubini theorem, for all $j\in\N$ and $q\in\N$ such that $j<q$ one has 
\begin{align*}
I(2j\pi)-I(2q\pi)&=\int_{2j\pi}^{2q\pi}
	\frac{\sin(\frac{\pi p}2+2j\pi)-\sin(\frac{\pi p}2+u)}
	{u^{p+1}}\,\dd u \\
&=-\int_{2j\pi}^{2q\pi} \frac{\dd u}{u^{p+1}}\,
\int_{2j\pi}^u\dd t\,\cos\Big(\frac{\pi p}2+t\Big) \\
&=-\int_{2j\pi}^{2q\pi}\dd t\,\cos\Big(\frac{\pi p}2+t\Big)
\int_t^{2q\pi} \frac{\dd u}{u^{p+1}}\,\\
&=-\frac1p\,\int_{2j\pi}^{2q\pi}\dd t\,\cos\Big(\frac{\pi p}2+t\Big)\,
\Big(\frac1{t^p}-\frac1{(2q\pi)^p}\Big)\,\\
&=-\frac1p\,\int_{2j\pi}^{2q\pi}\dd t\,\cos\Big(\frac{\pi p}2+t\Big)\,
\frac1{t^p}\,\\
&=-\frac1{p\Ga(p)}\,\int_{2j\pi}^{2q\pi}\dd t\,\cos\Big(\frac{\pi p}2+t\Big)\,
\int_0^\infty u^{p-1}e^{-tu}\,\dd u\\
&=-\frac1{\Ga(p+1)}\,\int_0^\infty u^{p-1}\,\dd u
\int_{2j\pi}^{2q\pi}\dd t\,\cos\Big(\frac{\pi p}2+t\Big)\,e^{-tu}\\
&=\frac1{\Ga(p+1)}\,\int_0^\infty u^{p-1}\,
\frac{\sin\frac{\pi p}2-u\cos\frac{\pi p}2}{1+u^2}\,
(e^{-2j\pi u}-e^{-2q\pi u})\,\dd u;
\end{align*}
letting now $q\to\infty$ and using again the equality  $I(\infty-)=0$, by the dominated convergence and a comparison of the integrands one obtains 
\begin{align}
I(2j\pi)&=\frac1{\Ga(p+1)}\,\int_0^\infty u^{p-1}\,
\frac{\sin\frac{\pi p}2-u\cos\frac{\pi p}2}{1+u^2}\,
e^{-2j\pi u}\,\dd u \notag\\
&\ge\frac1{\Ga(p+1)}\,\int_0^\infty u^{p-1}\,
\frac{\sin\frac{\pi p}2-u\cos\frac{\pi p}2}{1+\tan^2\frac{\pi p}2}\,
e^{-2j\pi u}\,\dd u \notag\\
&=\frac{(2j\pi\tan\frac{\pi p}{2}-p)\,\cos^3\frac{\pi p}{2}}
{(2j\pi)^{p+1}p}
\ge\frac{(j\pi^2-1)\,\cos^3\frac{\pi p}{2}}
{(2j\pi)^{p+1}}>0,\label{eq:>0}
\end{align}
which completes the proof of \eqref{eq:I>0}.  

\emph{Step 3.}\quad 
Still assuming that $p\in(0,1)$, it follows from \eqref{eq:>0} that $I(2j\pi)\ge\frac{(j\pi^2-1)\,\cos^3\frac{\pi p}{2}}
{(2j\pi)^{p+1}}\OOG j^{-p}$ over all $j\in\N$. 
Next, for every $j\in\N$, the only local minima of $I$ on the interval  $[2j\pi,(2j+2)\pi]$ are the endpoints; so, $I(v)\ge I(2j\pi)\wedge I\big((2j+2)\pi\big)\OOG j^{-p}\OOG v^{-p}$ over all  $v\in[2j\pi,(2j+2)\pi]$ and all $j\in\N$. That is, $I(v)\OOG v^{-p}$ over all  $v\in[2\pi,\infty)$. 
On the other hand, by \eqref{eq:simplif}, $|I(v)|\le\int_v^\infty
\frac2{u^{p+1}}\,\dd u\OO v^{-p}$ over all $v\in(0,\infty)$. 
Thus, $I(v)\asymp v^{-p}$ over all $v\in[2\pi,\infty)$. 

Yet on the other hand, by \eqref{eq:simplif-}, $I(v)\asymp\int_0^v
\frac u{u^{p+1}}\,\dd u\asymp v^{1-p}$ over all $v$ in a right neighborhood of $0$. 

Also, $I_p$ is obviously continuous on $(0,\infty)$. So, in view of \eqref{eq:I>0}, for each $p\in(0,1)$, 
\begin{equation}\label{eq:I, p<1}
	I_p(v)\asymp v^{-p}(v\wedge1)\quad\text{over all $v>0$.}
\end{equation}

\emph{Step 4.}\quad Here, let us show, by induction on $\ell$,  that for each $p\in(0,\infty)\setminus\N$
\begin{equation}\label{eq:J}
	(-1)^\ell J_p(x,v)\asymp v^{-p}\big((vx)^{\ell+1}\wedge(vx)^\ell\big)
\end{equation}
over all $v>0$ and $x>0$, where
\begin{equation}\label{eq:J def}
	J_p(x,v):=\int_v^\infty\Re\frac{ e_\ell(-itx)}{(it)^{p+1}}\,\dd t=x^p\,I_p(vx)  
\end{equation}
for all $v>0$ and $x\ge0$. 
For $\ell=0$ (that is, for $p\in(0,1)$), \eqref{eq:J} is equivalent to \eqref{eq:I, p<1}. 
Assume now that $\ell\ge1$ and \eqref{eq:J} holds for $\ell-1$ and $p-1$ instead of $\ell$ and $p$, respectively. Then, by \eqref{eq:abs} and Fubini's theorem, for all $v>0$ and $x>0$
\begin{equation}\label{eq:J induction}
\begin{aligned}
	(-1)^\ell J_p(x,v)
	&=(-1)^{\ell-1}\int_0^x J_{p-1}(u,v)\,\dd u \\
	&\asymp\int_0^x v^{-(p-1)}\big((vu)^\ell\wedge(vu)^{\ell-1}\big)\,\dd u
	=v^{-p}\tilde J_\ell(vx),
\end{aligned}	
\end{equation}
where $\tilde J_\ell(t):=\int_0^t (z^\ell\wedge z^{\ell-1})\,\dd z$ for $t>0$, so that $\tilde J_\ell(t)\asymp t^{\ell+1}$ over all $t$ in a right neighborhood of $0$,  $\tilde J_\ell(t)\asymp t^\ell$ over all $t$ in a left neighborhood of $\infty$, and  $\tilde J_\ell>0$ on $(0,\infty)$. Therefore, $\tilde J_\ell(t)\asymp t^{\ell+1}\wedge t^\ell$ over all $t>0$. This and \eqref{eq:J induction} yield \eqref{eq:J}. 

\emph{Step 5.}\quad In view of \eqref{eq:J def} and
since $e_\ell(itx)=e_\ell(itx_+)+e_\ell(-itx_-)$ for all real $t$ and $x$, one has 
\begin{equation}\label{eq:step5}
	\int_v^\infty\Re\frac{\E e_\ell(itX)}{(it)^{p+1}}\,\dd t
	=\int_v^\infty\Re\frac{\E e_\ell(itX_+)}{(it)^{p+1}}\,\dd t
	+\E J_p(X_-,v) 
\end{equation}
for all $v>0$ by Fubini's theorem, which is applicable because of \eqref{eq:e estimate} and \eqref{eq:EX ell}. 
By \eqref{eq:0} with $X_+$ in place of $X$, the integral on the right-hand side of \eqref{eq:step5} converges to $\frac\pi{\Ga(p+1)}\,\E X_+^p$ as $v\downarrow0$. 

So, condition (I) of Theorem~\ref{cor:improp} is equivalent to $\E J_p(X_-,v)\to0$ as $v\downarrow0$, which in turn is equivalent, by \eqref{eq:J}, to $v^{-p}\E\big((vX_-)^{\ell+1}\wedge(vX_-)^\ell\big)\to0$ as $v\downarrow0$ and then to 
\begin{equation}\label{eq:ii}
x^{p-\ell-1}\E X_-^{\ell+1}\ii{X_-\le x}+x^{p-\ell}\E X_-^\ell\ii{X_->x}
	\underset{x\to\infty}\longrightarrow0.
\end{equation}

\emph{Step 6.}\quad Here we complete the proof of Theorem~\ref{cor:improp} by showing that its condition (II) is equivalent to \eqref{eq:ii}. (Cf.\ \cite[Lemma~1]{pitman56} and \cite[Lemma~11.2.1]{kawata}.) 

Indeed, $\PP(X_->x)\le x^{-\ell}\E X_-^\ell\ii{X_->x}$  
for all $x>0$, and so, \eqref{eq:ii} does imply condition (II). 

Vice versa, suppose now that condition (II) holds. Then for any $\vp\in(0,1-\frac p{\ell+1})$ and all real $x>1$ 
\begin{align*}
	\E X_-^{\ell+1}\ii{X_-\le x}
	&=\E\int_0^\infty(\ell+1)y^\ell\,\ii{y<X_-}\,\dd y\,\ii{X_-\le x} \\
	&\le\int_0^x(\ell+1)y^\ell\,\PP(X_->y)\,\dd y \\	&\le\int_0^{x^\vp}(\ell+1)y^\ell\,\dd y 
	+o\Big(\int_{x^\vp}^x(\ell+1)y^\ell\,\frac1{y^p}\dd y\Big)
	=o(x^{\ell+1-p})
\end{align*}
as $x\to\infty$. 
Also, for all $x>0$ 
\begin{align*}
	\E X_-^\ell\ii{X_->x}
	&=\E\int_0^\infty\ell y^{\ell-1}\,\ii{y<X_-}\,\dd y\,\ii{X_->x} \\
	&=\int_0^\infty\ell y^{\ell-1}\,\PP(X_->x\vee y)\,\dd y \\
	&=\int_0^x\ell y^{\ell-1}\,\dd y \;\PP(X_->x)\,
	+	\int_x^\infty\ell y^{\ell-1}\,\PP(X_->y)\,\dd y \\
	&=o\Big(\int_0^x\ell y^{\ell-1}\,\dd y \;\frac1{x^p}\,
	+	\int_x^\infty\ell y^{\ell-1}\,\frac1{y^p}\,\dd y\Big)
	=o(x^{\ell-p})
\end{align*}
as $x\to\infty$. 
So, condition (II) implies \eqref{eq:ii}. 
This completes Step~6 and thus the entire proof of Theorem~\ref{cor:improp}.
\end{proof}





\begin{thebibliography}{}
\bibitem{bent-liet02}
{\sc Bentkus, V.} (2002)
A remark on the inequalities of Bernstein, Prokhorov, Bennett, Hoeffding, and Talagrand. 
\newblock {\em Lithuanian Math. J.\/}~{\bf42}, 262--269.
\MR{1947624}

\bibitem{bent-jtp}
{\sc Bentkus, V.} (2003)
\newblock An inequality for tail probabilities of martingales with differences bounded from one side. 
\newblock {\em J. Theoret. Probab.\/}~{\bf16}, 161--173.
\MR{1956826}

\bibitem{bent-ap} 
{\sc Bentkus, V.} (2004)
\newblock On Hoeffding's inequalities. 
\newblock {\em Ann. Probab.\/}~{\bf32}, 1650--1673.
\MR{2060313}

\bibitem{be-64pp}
{\sc Bentkus, V.; Kalosha, N.; van Zuijlen, M.\ C.\ A.} (2006)
\newblock On domination of tail probabilities of (super)martingales: explicit bounds.  
\newblock {\em Lith. Math. J.\/}~{\bf46}, 1--43.
\MR{2251440}

\bibitem{08hoeff}
{\sc Bentkus, V.} (2008)
\newblock An extension of the Hoeffding inequality to unbounded random variables.  
\newblock {\em Lith. Math. J.\/}~{\bf48},137--157.
\MR{2425108}

\bibitem{boas67}
{\sc Boas, Jr., R.\ P.} (1967)
\newblock Lipschitz behavior and integrability of characteristic
              functions.  
\newblock {\em Ann. Math. Statist.\/}~{\bf38}, 32--36.
\MR{0207006}

\bibitem{boas68}
{\sc Boas, Jr., R.\ P.} (1968/1969)
\newblock Laplace transforms, characteristic functions, and {L}ipschitz
              conditions.  
\newblock {\em Publ. Ramanujan Inst. No.\/}~{\bf1}, 71--74.
\MR{0279534}

\bibitem{brown70}
{\sc Brown, B. M.} (1970)
\newblock Characteristic functions, moments, and the central limit theorem.  
\newblock {\em Ann. Math. Statist.\/}~{\bf41}, 658--664.
\MR{0261672}

\bibitem{brown72}
{\sc Brown, B. M.} (1972)
\newblock Formulae for absolute moments.  
\newblock {\em J. Austral. Math. Soc.\/}~{\bf13}, 104--106.
\MR{0297015}

\bibitem{de la pena et al}
{\sc de la Pe{\~n}a, V.\ H.\ and Ibragimov, R.\ and Jordan, S.} (2004)
\newblock Option bounds. 
\newblock {\em J.\ Appl.\ Probab.\/}~{\bf41A}, 145--156.
\MR{2057571}	

\bibitem{eaton1}
{\sc Eaton, M.~L.} (1970).
\newblock A note on symmetric {B}ernoulli random variables.
\newblock {\em Ann. Math. Statist.\/}~{\bf 41}, 1223--1226.
\MR{268930}

\bibitem{eaton2}{\sc Eaton, M.~L.} (1974).
\newblock A probability inequality for linear
combinations of bounded random variables.
\newblock {\em Ann. Statist.\/}~{\bf 2}, 609--614.

\bibitem{fortet}{\sc Fortet, R.} (1944).
\newblock Calcul des moments d'une fonction de r\'epartition \`a partir
              de sa caract\'eristique.
\newblock {\em Bull.\ Sci.\ Math.\ (2)\/}~{\bf 68}, 117--131.
\MR{0012706}

%
%

\bibitem{hsu}
{\sc Hsu, P.\ L.} (1951).
\newblock Absolute moments and characteristic functions.
\newblock {\em J. Chinese Math. Soc. (N.S.)\/}~\textbf{1}, 257--280.
\MR{0072373}

\bibitem{ibrag66}
{\sc Ibragimov, I. A.} (1966).
\newblock On the accuracy of approximation by the normal distribution of
              distribution functions of sums of independent random
              variables.
\newblock {\em Theor.\ Probability Appl.\/}~\textbf{11}, 632--655.
\MR{0212853}

\bibitem{kawata}
{\sc Kawata, T.} (1972).
\newblock Fourier analysis in probability theory.
\newblock Academic Press, New York-London.
\MR{0464353}

\bibitem{lukacs}
{\sc Lukacs, E.} (1970).
\newblock Characteristic functions. Second edition. 
\newblock Hafner Publishing Co., New York.
\MR{0346874}


\bibitem{petrov95}
{\sc Petrov, V.\ V.} (1995).
\newblock Limit theorems of probability theory.
\newblock Oxford University Press, New York.
\MR{1353441}

\bibitem{pin94}
{\sc Pinelis, I.} (1994).
\newblock Extremal probabilistic problems and {H}otelling's {$T\sp 2$} test
  under a symmetry condition.
\newblock {\em Ann. Statist.\/}~\textbf{22},~1, 357--368.
\MR{1272088}

\bibitem{pin98} {\sc Pinelis, I.} (1998).
\newblock Optimal tail comparison based on comparison of moments. 
\newblock {\em High dimensional probability
   (Oberwolfach, 1996), 297--314, Progr. Probab., \textbf{43}, Birkh\"{a}user, Basel.\/} 
\MR{1652335}

\bibitem{pin99} {\sc Pinelis, I.} (1999).
\newblock Fractional sums and integrals of $r$-concave tails and applications to comparison probability
   inequalities
\newblock {\em Advances in stochastic inequalities (Atlanta, GA, 1997), 149--168, Contemp. Math., \textbf{234}, Amer. Math. Soc., Providence, RI.\/}
\MR{1694770}

\bibitem{pin-eaton}
{\sc Pinelis, I.} (2002).
\newblock L'Hospital type rules for monotonicity: applications to probability inequalities for sums of bounded random variables.
\newblock {\em J. Inequal. Pure Appl. Math.\/}~\textbf{3}, no. 1, Article 7, 9 pp. (electronic).
\MR{1888922}

\bibitem{binom} {\sc Pinelis, I.}, \emph{Binomial upper bounds on generalized
moments and tail probabilities of
(super)martingales with differences
bounded from above}, in 
IMS Lecture Notes-Monograph Series, 
High Dimensional Probability,
Institute of Mathematical Statistics, \textbf{51}, (2006).  
DOI: 10.1214/074921706000000743.
\url{http://arxiv.org/abs/math.PR/0512301}.
\MR{2387759}

  \bibitem{normal} {\sc Pinelis, I.}, \emph{Normal domination of (super)martingales},  
Electronic J.\ Probab., 
\textbf{11}, (2006), Paper 39, 1049-1070.
\url{http://www.math.washington.edu/~ejpecp/viewarticle.php?id=1648\&layout}\url{=abstract}.
\MR{2268536}

\bibitem{asymm}
{\sc Pinelis, I.}, (2007).
\emph{Exact inequalities for sums of asymmetric random variables, with applications}, Probab. Theory Related Fields, {\bf139} 605--635.
 \MR{2268536}


\bibitem{pin08}
{\sc Pinelis, I.}, (2009).
\emph{On the Bennett-Hoeffding inequality}, 
preprint, arXiv:0902.4058. 

\bibitem{pos-arxiv}
{\sc Pinelis, I.}, (2009).
\emph{Positive-part moments via the Fourier-Laplace transform}, 
preprint -- the first, original version of this paper -- arXiv:0902.4214v1. 

\bibitem{pitman56}
{\sc Pitman, E.\ J.\ G.} (1956).
\newblock On the derivatives of a characteristic function at the origin.
\newblock {\em Ann. Math. Statist.\/}~\textbf{27}, 1156--1160.
\MR{0084947}


\bibitem{vonBahr}
{\sc von Bahr, B.} (1965).
\newblock On the convergence of moments in the central limit theorem.
\newblock {\em Ann. Math. Statist.\/}~\textbf{36}, 808--818.
\MR{0179827}


\bibitem{zygmund47}
{\sc Zygmund, A.} (1947).
\newblock A remark on characteristic functions.
\newblock {\em Ann. Math. Statist.\/}~\textbf{18}, 272--276.
\MR{0021613}

\end{thebibliography}
\end{document}